\newtheorem{thm}{Theorem}[section] 
\newtheorem{lemma}[thm]{Lemma}
\newtheorem{prop}[thm]{Proposition}
\newtheorem{cor}[thm]{Corollary}
\newtheorem{rem}[thm]{Remark}
\theoremstyle{definition}
\newcommand{\C}{\mathbb{C}}
\newcommand{\R}{\mathbb{R}}
\newcommand{\Q}{\mathbb{Q}}
\newcommand{\Z}{\mathbb{Z}}
\newcommand{\g}{\frak{g}}
\numberwithin{equation}{section}
\begin{document} 

\title[Dolbeault cohomology of Oeljeklaus-Toma manifolds]{Remarks on Dolbeault cohomology of Oeljeklaus-Toma manifolds and Hodge theory}

\author[H. Kasuya]{Hisashi Kasuya}

\address{Department of Mathematics, Graduate School of Science, Osaka University, Osaka,
Japan}

\email{kasuya@math.sci.osaka-u.ac.jp}

\subjclass[2010]{22E25, 53C55, 58A14}

\keywords{Oeljeklaus-Toma manifolds, solvmanifolds, Hodge theory on non-K\"ahler manifolds}

\begin{abstract}
We give explicit harmonic representatives of Dolbeault cohomology of Oeljeklaus-Toma manifolds and show that they are geometrically Dolbeault formal.
We also give explicit harmonic representatives of Bott-Chern cohomology of Oeljeklaus-Toma manifolds of type $(s,1)$ and study the Angella-Tomassini inequality.

\end{abstract}

\maketitle
\section{Introduction}

Let $M$ be a compact complex manifold, $A^{\ast}(M)$ the real de Rham complex of $M$ with the  exterior differential $d$ and $H^{\ast}(M)$ the de Rham cohomology.
On the  de Rham complex $A^{\ast}(M)_{\C}$, we consider  the double complex structure $(A^{\ast}(M)_{\C}=A^{\ast,\ast}(M), d=\partial+\bar\partial)$ such that  $\bar\partial $ is the Dolbeault operator.
Denote by $H^{\ast,\ast}_{\bar\partial}(M)$ the Dolbeault cohomology of $M$.
Associated with this double complex, we have the spectral sequence $E^{\ast,\ast}_{r}$ so that $E_{1}^{p,q}=H^{p,q}_{\bar\partial}(M)$ and it converges to the de Rham cohomology.
We call it the Fr\"olicher spectral sequence of $M$.
We have the inequality $\sum_{p+q=k}\dim H^{p,q}_{\bar\partial}(M)\ge \dim H^{k}(M)$ and
the equality holds  if and only if the Fr\"olicher spectral sequence of $M$ degenerates at $E_{1}$-term.

The Bott-Chern cohomology of a complex manifold $M$ is defined by $H^{\ast,\ast}_{BC}(M)=\frac{\rm ker\partial \cap ker \bar\partial}{\rm im \partial\bar\partial}$.
On a complex manifold $M$, we say that $\partial\bar\partial$-Lemma holds if the natural map $H^{\ast,\ast}_{BC}(M)\to H^{\ast}(M)_{\C}$ is injective.
In \cite{DGMS}, it is shown that   the $\partial\bar\partial$-Lemma holds if and only if
\begin{enumerate}
\item The Fr\"ohlicher spectral sequence degenerates at $E_{1} $ and
\item The filtration on $H^{k}(M)_{\C}$ induced by the filtration $F^{r}(A^{\ast}(M)_{\C})=\bigoplus_{p\ge r}A^{p,q}(M)$ is a Hodge structure of weight $k$ for every $k\ge 0$.
\end{enumerate}
It is known that on a compact K\"ahler manifold,   the $\partial\bar\partial$-Lemma holds and 
this implies that every compact K\"ahler manifold is formal   in the sense  of Sullivan (\cite{DGMS}).
We notice that on a complex manifold $M$, the $\partial\bar\partial$-Lemma also implies the Dolbeault formality as in \cite{NT}.

We are interested in these Hodge theoretical  properties on non-K\"ahler manifolds.
In \cite{OT},  Oeljeklaus and Toma introduce compact complex manifolds (OT-manifolds) associated with algebraic number fields.
OT manifolds are higher dimensional analogues of Inoue surfaces of type $S^{0}$.
In \cite{OTH}, Otiman and  Toma proves that  the Fr\"olicher spectral sequence of every OT-manifold degenerates at $E_{1}$-term.
The purpose of this paper is to study the  Dolbeault cohomology and Hodge theory on OT-manifolds in more detail by using this result.
Presenting OT-manifolds as solvmanifolds as in \cite{KV}, we give explicit harmonic representatives of Dolbeault cohomology of OT-manifolds.
By this,   we prove geometrical Dolbeault formality (and hence Dolbeault formality) of OT-manifolds.
We know that the $\partial\bar\partial$-Lemma does not  hold on any OT-manifold, since the Hodge symmetry does not hold (see \cite[Proposition 2.3, 2.5]{OT}).
Hence we obtain many examples of compact complex manifolds without $\partial\bar\partial$-Lemma  satisfying the $E_{1}$-degeneration of the Fr\"olicher spectral sequence and the Dolbeault formality.

We also give explicit harmonic representatives of Bott-Chern cohomology of OT-manifolds of type $(s,1)$.
By this, we study the  Fr\"olicher-type inequality in \cite{AT}  for Bott-Chern cohomology of OT-manifolds of type $(s,1)$.
This says  that OT-manifolds of type $(s,1)$ are "near" to the $\partial\bar\partial$-Lemma.

\section{de Rham and Dolbeault cohomology of certain solvmanifolds}
We consider the semi-direct product $G=\R^{n}\ltimes_{\phi} (\R^{n}\oplus \C^{m})$ of real abelian Lie groups $\R^{n}$ and $\R^{n}\oplus \C^{m}$ given by the homomorphism $\phi:\R^{n}\to {\rm Aut}(\R^{n}\oplus \C^{m})$ so that 
\[\phi(x)(y, z)=(e^{x_{1}}y_{1},\dots ,e^{x_{n}}y_{n}, e^{\psi_{1}(x)}z_{1},\dots,  e^{\psi_{m}(x)}z_{m})
\]
for $x=(x_{1},\dots ,x_{n})\in \R^{n}$, $(y, z)=(y_{1},\dots ,y_{n}, z_{1},\dots,  z_{m})\in \R^{n}\oplus \C^{m}$ and some linear 
functions $\psi_{1},\dots, \psi_{m}: \R^{n}\to  \C$.
$G$ is a simply connected solvable Lie group.
Suppose we have lattices $\Lambda\subset \R^{n}$ and $\Delta \subset \R^{n}\oplus \C^{m}$ so that for every $\lambda\in \Lambda $ the automorphism  $\phi(\lambda)$ on $\R^{n}\oplus \C^{m}$ preserves $\Delta$.
Then the subgroup $\Gamma=\Lambda\ltimes_{\phi} \Delta\subset G$ is a cocompact discrete subgroup of $G$.
Consider the solvmanifold $\Gamma\backslash G$.
We identify the de Rham complex $A^{\ast}(\Gamma\backslash G)$ of $\Gamma\backslash G$ with the subcomplex of  the de Rham complex $A^{\ast}( G)$ of  $G$ consisting of the $\Gamma$-invariant differential forms.

Let $\g$ be  the Lie algebra of $G$. We identify the Lie algebra complex $\bigwedge \g^{\ast}$ with the subcomplex of $A^{\ast}(\Gamma\backslash G)$ consisting of the left-$G$-invariant  differential forms.
We have a basis 
\[ dx_{1},\dots, dx_{n},e^{-x_{1}}dy_{1},\dots, e^{-x_{n}}dy_{n}, e^{-\psi_{1}(x)}dz_{1},\dots,  e^{-\psi_{m}(x)}dz_{m}, e^{-\bar\psi_{1}(x)}d\bar{z}_{1},\dots , e^{-\bar\psi_{m}(x)}d\bar{z}_{m}\]
 of $\g^{\ast}_{\C}=\g^{\ast}\otimes \C$.
We define the finite dimensional graded subspace 
\[A^{\ast} _{\Lambda}= \left \langle dx_{I}\wedge dy_{J}\wedge dz_{K}\wedge d\bar{z}_{L}\left \vert\begin{array}{cc} I, J\subset [n], K,L\subset [m],  \\ \exp\left(\sum_{j\in J}x_{j}+\sum_{k\in K}\psi_{k}(x)+\sum_{l\in L}\bar{\psi}_{l}(x)\right)\vert _{x\in \Lambda}=1  \end{array} \right.   \right\rangle.
\]
of  $A^{\ast}(\Gamma\backslash G)$ where $[n]=\{1,\dots, n\}$ and for a multi-index $I=\{i_{1},\dots i_{a}\}\subset [n]$ we write $dx_{I}=dx_{i_{1}}\wedge\dots \wedge dx_{i_{a}}$.
Obviously, the exterior differential on $A^{\ast} _{\Lambda}$ is trivial.
\begin{thm}[\cite{KM,KR,KN}]
$A^{\ast} _{\Lambda}$ is a differential graded subalgebra of $A^{\ast}(\Gamma\backslash G)$ and the inclusion $A^{\ast} _{\Lambda}\subset A^{\ast}(\Gamma\backslash G)$ induces a cohomology isomorphism.

\end{thm}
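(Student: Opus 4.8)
The plan is to prove the statement in two stages: the purely algebraic assertion that $A^{\ast}_{\Lambda}$ is a differential graded subalgebra, and then the cohomological comparison, for which I would exploit the torus-bundle (Mostow) structure of $\Gamma\backslash G$. First I would record that each of $dx_{i}, dy_{j}, dz_{k}, d\bar{z}_{l}$ is, as a form on the universal cover $G$, the exterior derivative of an ambient coordinate function, hence closed; therefore every monomial $dx_{I}\wedge dy_{J}\wedge dz_{K}\wedge d\bar{z}_{L}$ is closed and $d$ vanishes identically on $A^{\ast}_{\Lambda}$, as already noted. Under left translation by $\gamma=(\lambda,\cdot)\in\Gamma$ such a monomial is multiplied by the scalar $\exp(\sum_{j\in J}\lambda_{j}+\sum_{k\in K}\psi_{k}(\lambda)+\sum_{l\in L}\bar{\psi}_{l}(\lambda))$, so the defining weight condition is exactly $\Gamma$-invariance. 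Since these weights add under the wedge product (a product of weight-trivial monomials is again weight-trivial), $A^{\ast}_{\Lambda}$ is closed under $\wedge$ and contains the constants, which establishes the DG-subalgebra claim with trivial differential.

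For the cohomology isomorphism I would use the fibration $\pi\colon\Gamma\backslash G\to\Lambda\backslash\R^{n}=T^{n}$ induced by the projection $G\to\R^{n}$, whose fiber is the real torus $F=\Delta\backslash(\R^{n}\oplus\C^{m})$ and whose monodromy is the action of $\Lambda\cong\Z^{n}$ through $\phi$. On $H^{\ast}(F)=\bigwedge\langle dy_{j},dz_{k},d\bar{z}_{l}\rangle$ this monodromy is diagonal: the class $dy_{J}\wedge dz_{K}\wedge d\bar{z}_{L}$ is an eigenvector with character $\chi_{J,K,L}(\lambda)=\exp(\sum_{j\in J}\lambda_{j}+\sum_{k\in K}\psi_{k}(\lambda)+\sum_{l\in L}\bar{\psi}_{l}(\lambda))$. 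Since $T^{n}=K(\Z^{n},1)$, the Leray--Serre spectral sequence of $\pi$ (equivalently the Lyndon--Hochschild--Serre sequence of $1\to\Delta\to\Gamma\to\Lambda\to 1$) has $E_{2}^{p,q}=H^{p}(\Z^{n};H^{q}(F))=\bigoplus_{|J|+|K|+|L|=q}H^{p}(\Z^{n};\C_{\chi_{J,K,L}})$.

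The decisive computation is that, via the Koszul complex of $\Z^{n}$, one has $H^{p}(\Z^{n};\C_{\chi})=\bigwedge^{p}\C^{n}$ when $\chi|_{\Lambda}\equiv 1$ and $H^{p}(\Z^{n};\C_{\chi})=0$ otherwise. Hence $E_{2}$ is isomorphic, as a graded vector space (with $p=|I|$ and $q=|J|+|K|+|L|$), to $A^{\ast}_{\Lambda}$ under $dx_{I}\otimes(dy_{J}\wedge dz_{K}\wedge d\bar{z}_{L})\leftrightarrow dx_{I}\wedge dy_{J}\wedge dz_{K}\wedge d\bar{z}_{L}$, the surviving fiber classes being precisely the weight-trivial ones. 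I would then observe that each generator of $A^{\ast}_{\Lambda}$ is an honest closed form on $\Gamma\backslash G$ of pure base-filtration degree, hence a permanent cocycle whose symbol represents the corresponding $E_{2}$-class; as these symbols span $E_{2}$ and are already killed by $d$, every higher differential vanishes and the sequence degenerates at $E_{2}$. Consequently the inclusion-induced map $A^{\ast}_{\Lambda}=H^{\ast}(A^{\ast}_{\Lambda})\to H^{\ast}(\Gamma\backslash G)$ is injective with image of full dimension $\dim E_{\infty}=\dim E_{2}=\dim A^{\ast}_{\Lambda}$, hence an isomorphism.

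The main obstacle is exactly the vanishing $H^{p}(\Z^{n};\C_{\chi})=0$ for nontrivial $\chi$ together with the degeneration argument, since this is what guarantees that the infinite-dimensional space of all $\Gamma$-invariant forms contributes nothing beyond the finite-dimensional $A^{\ast}_{\Lambda}$. It is here that one must avoid appealing to Hattori's theorem directly: the presence of nonreal $\psi_{k}$ makes $G$ non-completely-solvable, so the left-invariant complex $\bigwedge\g^{\ast}$ need not compute $H^{\ast}(\Gamma\backslash G)$. The character/Koszul computation, however, survives complexification of the diagonal action and replaces Hattori's hypothesis; this is the content of the cited extensions of the theorems of Nomizu and Hattori, and packaging it correctly for the semidirect product at hand is the step I expect to require the most care.
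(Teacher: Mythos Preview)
The paper does not supply its own proof of this theorem; it is quoted from \cite{KM,KR,KN}. Your outline is correct and is essentially the argument underlying those references: the torus fibration $\Gamma\backslash G\to\Lambda\backslash\R^{n}$, the diagonalization of the monodromy on $H^{\ast}(\Delta\backslash(\R^{n}\oplus\C^{m}))$ into the characters $\chi_{J,K,L}$, the vanishing $H^{\ast}(\Z^{n};\C_{\chi})=0$ for $\chi\not\equiv 1$ via the Koszul/K\"unneth computation, and the degeneration forced by the fact that each surviving $E_{2}$-class is represented by a genuine closed form in $A^{\ast}_{\Lambda}$. Your remark that Hattori's theorem is unavailable here (the $\psi_{k}$ may be non-real, so $G$ need not be completely solvable) is exactly the point of the cited extensions, and your character-by-character replacement is the correct substitute. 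There is nothing further in the paper to compare against.
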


Regarding $1$-forms 
\[\alpha_{1}= dx_{1}+\sqrt{-1}e^{-x_{1}}dy_{1},\dots, \alpha_{n}=dx_{n}+\sqrt{-1}e^{-x_{n}}dy_{n}, \beta_{1}= e^{-\psi_{1}(x)}dz_{1},\dots ,\beta_{m}= e^{-\psi_{m}(x)}dz_{m}\]
as $(1,0)$-forms on $\Gamma\backslash G$, 
we have a left-$G$-invariant almost complex structure $J$ on $\Gamma\backslash G$.
We can easily check that $J$ is integrable.
\begin{rem}
We also consider $J$ as a  left-$G$-invariant  complex structure $J$ on the Lie group $G$.
The complex manifold $(G,J)$ is a universal covering of $\Gamma\backslash G$.
We have the biholomorphic map
\[G=\R^{n}\ltimes_{\phi} (\R^{n}\oplus \C^{m})\ni (x,y, z)\mapsto (y_{1}+\sqrt{-1}e^{x_{1}},\dots, y_{n}+\sqrt{-1}e^{x_{n}},z)\in H^{n}\times \C^{m}
\]
where $H$ is the complex  upper half-plane $H=\{z\in \C: {\rm Im} z>0\}=\R\times \R_{>0}$.
\end{rem}
We consider the Dolbeault complex $(A^{\ast,\ast} (\Gamma\backslash G), \bar\partial)$ of the complex manifold $(\Gamma\backslash G, J)$.
We have 
\[\bar\partial\alpha_{i}=-\frac{1}{2} \bar\alpha_{i}\wedge \alpha_{i},\qquad \bar\partial\bar{\alpha}_{i}=0,\qquad \bar\partial\beta_{i}=-\frac{1}{2}\psi_{i}(\bar\alpha)\wedge \beta_{i}\,\, {\rm and}\, \, \bar\partial\bar{\beta}_{i}=-\frac{1}{2}\bar{\psi}_{i}(\bar\alpha)\wedge \bar{\beta}_{i}
\]
where $\psi_{i}(\bar\alpha)$ and $\bar{\psi}_{i}(\bar\alpha)$ are $(0,1)$-forms associated with linear functions $\psi_{i}(x)$ and $\bar{\psi}_{i}(x)$ by putting $x=\bar\alpha=(\bar\alpha_{1},\dots, \bar\alpha_{n})$.
We define the 
 finite dimensional bigraded subspace 
\[B^{\ast,\ast} _{\Lambda}= \left \langle \exp(\Psi_{JKL}(x)) \bar{\alpha}_{I}\wedge \alpha_{J}\wedge \beta_{K}\wedge \bar{\beta}_{L}\left \vert\begin{array}{cc} I, J\subset [n], K,L\subset [m],  \\  \exp(\Psi_{JKL}(x))\vert _{x\in \Lambda}=1  \end{array} \right.   \right\rangle.
\]
of $A^{\ast,\ast} (\Gamma\backslash G)_{\C}$
where $\Psi_{JKL}(x)=\sum_{j\in J}x_{j}+\sum_{k\in K}\psi_{k}(x)+\sum_{l\in L}\bar{\psi}_{l}(x)$.
Since we have
\[\bar\partial\left( \bar{\alpha}_{I}\wedge \alpha_{J}\wedge \beta_{K}\wedge \bar{\beta}_{L}\right)=-\frac{1}{2}\Psi_{JKL}(\bar\alpha)\wedge \bar{\alpha}_{I}\wedge \alpha_{J}\wedge \beta_{K}\wedge \bar{\beta}_{L},
\]
the Dolbeault operator $\bar\partial$ on $B^{\ast,\ast} _{\Lambda}$ is trivial.
We can easily check that $B^{\ast,\ast} _{\Lambda}$ is differential bigraded  subalgebra of the Dolbeault complex $A^{\ast,\ast} (\Gamma\backslash G)$.

\begin{lemma}\label{injj}
Define the left-$G$-invariant  Hermitian metric 
\[h_{G}=\alpha_{1}\cdot \bar\alpha_{1}+\dots +\alpha_{n}\cdot \bar\alpha_{n}+\beta_{1}\cdot \bar\beta_{1}+\dots +\beta_{m}\cdot \bar\beta_{m}.
\]
Then $B^{\ast,\ast} _{\Lambda}$ consists of $\bar\partial$-harmonic forms on the Hermitian manifold $(\Gamma\backslash G,h_{G})$.
In particular, the inclusion  $B^{\ast,\ast} _{\Lambda}\subset A^{\ast,\ast} (\Gamma\backslash G)$ induces an injection $B^{\ast,\ast} _{\Lambda}\hookrightarrow H^{\ast,\ast}_{\bar\partial} (\Gamma\backslash G)$.
\end{lemma}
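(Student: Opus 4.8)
The plan is to prove harmonicity one generator at a time. Fix a generator $\omega=\exp(\Psi_{JKL}(x))\,\bar\alpha_{I}\wedge\alpha_{J}\wedge\beta_{K}\wedge\bar\beta_{L}$ of $B^{\ast,\ast}_{\Lambda}$. The computation displayed just before the lemma already gives $\bar\partial\omega=0$, so on the compact Hermitian manifold $(\Gamma\backslash G,h_{G})$ it suffices to prove $\bar\partial^{\ast}\omega=0$; then $\Box_{\bar\partial}\omega=0$ and $\omega$ is harmonic. I would use the standard adjoint identity $\bar\partial^{\ast}=-\ast\partial\ast$ with $\ast$ the $\C$-linear Hodge star of $h_{G}$. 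Since $h_{G}$ makes $\alpha_{1},\dots,\alpha_{n},\beta_{1},\dots,\beta_{m}$ a pointwise unitary $(1,0)$-coframe, the monomials $\bar\alpha_{I}\wedge\alpha_{J}\wedge\beta_{K}\wedge\bar\beta_{L}$ are pointwise orthonormal, and $\ast$, being $C^{\infty}$-linear, sends such a monomial to a nonzero constant multiple of the complementary monomial with the holomorphic and antiholomorphic index roles exchanged. Thus $\ast\omega=c\,\exp(\Psi_{JKL}(x))\,\mu$ with $c\neq0$ constant and $\mu=\bar\alpha_{J^{c}}\wedge\alpha_{I^{c}}\wedge\beta_{L^{c}}\wedge\bar\beta_{K^{c}}$, so $\bar\partial^{\ast}\omega=0$ becomes equivalent to $\partial(\ast\omega)=0$.

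Next I would compute $\partial(\ast\omega)$ from the structure equations for $\partial$, which are the conjugates of those listed for $\bar\partial$: $\partial\alpha_{i}=0$, $\partial\bar\alpha_{i}=-\tfrac12\alpha_{i}\wedge\bar\alpha_{i}$, $\partial\beta_{i}=-\tfrac12\psi_{i}(\alpha)\wedge\beta_{i}$ and $\partial\bar\beta_{i}=-\tfrac12\bar\psi_{i}(\alpha)\wedge\bar\beta_{i}$, where $\psi_{i}(\alpha)$, $\bar\psi_{i}(\alpha)$ denote the $(1,0)$-forms obtained by putting $x=\alpha$. Exactly as in the displayed $\bar\partial$-formula, these give $\partial\mu=-\tfrac12\,\Phi(\alpha)\wedge\mu$ with $\Phi(\alpha)=\sum_{i\in J^{c}}\alpha_{i}+\sum_{k\in L^{c}}\psi_{k}(\alpha)+\sum_{l\in K^{c}}\bar\psi_{l}(\alpha)$, while $\partial\exp(\Psi_{JKL}(x))=\tfrac12\Psi_{JKL}(\alpha)\exp(\Psi_{JKL}(x))$. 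By the Leibniz rule,
\[
\partial(\ast\omega)=\tfrac{c}{2}\,\exp(\Psi_{JKL}(x))\,\bigl(\Psi_{JKL}(\alpha)-\Phi(\alpha)\bigr)\wedge\mu ,
\]
so everything reduces to the single linear identity $\Psi_{JKL}(\alpha)-\Phi(\alpha)=0$ among $(1,0)$-forms.

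The key step is establishing this identity, and it is where the geometry enters through two linear relations among the $\psi_{k}$. First, since $\phi(\lambda)$ preserves the lattice $\Delta$, the real determinant of $\phi(\lambda)$ equals $\exp\bigl(\sum_{i}\lambda_{i}+\sum_{k}(\psi_{k}+\bar\psi_{k})(\lambda)\bigr)=1$ for all $\lambda\in\Lambda$; as $\Lambda$ spans $\R^{n}$ this forces $\sum_{i=1}^{n}x_{i}+\sum_{k=1}^{m}(\psi_{k}+\bar\psi_{k})(x)\equiv0$. Second, the defining condition $\exp(\Psi_{JKL})\vert_{\Lambda}=1$ means $\Psi_{JKL}(\lambda)\in2\pi\sqrt{-1}\,\Z$, so $\mathrm{Re}\,\Psi_{JKL}$ vanishes on $\Lambda$ and hence identically, giving $2\sum_{j\in J}x_{j}+\sum_{k\in K}(\psi_{k}+\bar\psi_{k})(x)+\sum_{l\in L}(\psi_{l}+\bar\psi_{l})(x)\equiv0$. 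Expanding the complements in $\Phi$ via $J^{c}=[n]\setminus J$, $K^{c}=[m]\setminus K$, $L^{c}=[m]\setminus L$ and using the first identity to cancel the full sums, one finds $\Psi_{JKL}(\alpha)-\Phi(\alpha)=2\sum_{j\in J}\alpha_{j}+\sum_{k\in K}(\psi_{k}+\bar\psi_{k})(\alpha)+\sum_{l\in L}(\psi_{l}+\bar\psi_{l})(\alpha)$, which is the left-hand side of the second identity evaluated at $x=\alpha$, hence $0$. Therefore $\partial(\ast\omega)=0$, so $\bar\partial^{\ast}\omega=0$ and $\omega$ is $\bar\partial$-harmonic. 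Finally, distinct generators are pointwise-orthogonal nonzero forms, hence linearly independent in the finite-dimensional space of harmonic forms, and the canonical isomorphism of harmonic forms with $H^{\ast,\ast}_{\bar\partial}(\Gamma\backslash G)$ yields the asserted injection. I expect the bookkeeping of complementary index sets under $\ast$, together with the combined use of the two linear identities, to be the only real obstacles; everything else is the conjugate of a computation already carried out in the text.
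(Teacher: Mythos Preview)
Your argument is correct and uses the same two key inputs as the paper: unimodularity of $G$ (giving $\sum_{i}x_{i}+\sum_{k}(\psi_{k}+\bar\psi_{k})(x)\equiv 0$) and the fact that $\exp(\Psi_{JKL})\vert_{\Lambda}=1$ forces $\Psi_{JKL}$ to be purely imaginary. The paper packages these slightly more structurally by observing that they imply $\bar\ast(B^{\ast,\ast}_{\Lambda})\subset B^{\ast,\ast}_{\Lambda}$ (via the identity $\exp(\Psi_{\check{J}\check{K}\check{L}})=\exp(\bar\Psi_{JKL})$, which is equivalent to your $\Psi_{JKL}=\Phi$), whence $\bar\partial^{\ast}=-\bar\ast\,\bar\partial\,\bar\ast$ kills $B^{\ast,\ast}_{\Lambda}$ immediately; your direct computation of $\partial(\ast\omega)$ unwinds exactly this.
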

\begin{proof}
We consider the conjugate Hodge star operator  $\bar\ast$ associated with this metric.
Then we have
\[\bar\ast \left( \exp(\Psi_{JKL}(x)) \bar{\alpha}_{I}\wedge \alpha_{J}\wedge \beta_{K}\wedge \bar{\beta}_{L}\right)=\exp(\bar{\Psi}_{JKL}(x))\bar{\alpha}_{\check{I}}\wedge \alpha_{\check{J}}\wedge \beta_{\check{K}}\wedge \bar{\beta}_{\check{L}}
\]
where for a multi-index $I\subset [n]$ we write $\check{I}=[n]-I$.
Since $G$ admits a lattice $\Gamma$, $G$ is unimodular (see \cite[Remark 1.9]{R}).
This implies 
\[\exp({\Psi}_{[n][m][m]}(x))=\exp\left(\sum_{j\in [n]}x_{j}+\sum_{k\in [m]}\psi_{k}(x)+\sum_{l\in [m]}\bar{\psi}_{l}(x)\right)=1.\]
Thus 
\[\exp(\Psi_{\check{J}\check{K}\check{L}}(x))= \exp(-\Psi_{JKL}(x)).
\]
Since $\Delta$ is a lattice in $\R^{n}$,   if  $\exp(\Psi_{JKL}(x))=1$ for any $x\in \Lambda$, then $\exp(\Psi_{JKL}(x))$ must be unitary for any $x\in \R^{n}$.
Thus $ \exp(-\Psi_{JKL}(x))=\exp(\bar{\Psi}_{JKL}(x))$ and so we have $\exp(\Psi_{\check{J}\check{K}\check{L}}(x))=\exp(\bar{\Psi}_{JKL}(x))$.
This implies 
\[\bar\ast \left(B^{\ast,\ast} _{\Lambda}\right)\subset B^{\ast,\ast} _{\Lambda}
\]
and so $B^{\ast,\ast} _{\Lambda}$ consists of $\bar\partial$-harmonic forms on the Hermitian manifold $\Gamma\backslash G$.
Hence the  lemma follows.
\end{proof}

Obviously we have a graded algebra isomorphism ${\rm Tot}^{\ast} B^{\ast,\ast} _{\Lambda}\cong A^{\ast} _{\Lambda}$.
Thus we have the following statement.
\begin{cor}\label{iso}
If the Fr\"olicher spectral sequence of the complex manifold $(\Gamma\backslash G, J)$ degenerates at $E_{1}$-term, then 
the inclusion  $B^{\ast,\ast} _{\Lambda}\subset A^{\ast,\ast} (\Gamma\backslash G)$ induces an isomorphism $B^{\ast,\ast} _{\Lambda}\cong H^{\ast,\ast}_{\bar\partial} (\Gamma\backslash G)$.
\end{cor}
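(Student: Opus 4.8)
The plan is to run a bidegree-by-bidegree dimension count, playing the injection of Lemma \ref{injj} against two exact total-degree identities: one coming from the de Rham isomorphism of the Theorem, the other from the hypothesized $E_1$-degeneration. Since Lemma \ref{injj} already produces an injection, it suffices to upgrade it to an isomorphism, and for that it is enough to match dimensions in each bidegree $(p,q)$.

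First I would record the easy inequality. Lemma \ref{injj} gives, for every $(p,q)$, an injection $B^{p,q}_{\Lambda}\hookrightarrow H^{p,q}_{\bar\partial}(\Gamma\backslash G)$, and this map respects the bigrading by construction; hence $\dim B^{p,q}_{\Lambda}\le \dim H^{p,q}_{\bar\partial}(\Gamma\backslash G)$. Both spaces are finite-dimensional, the former by definition and the latter because $\Gamma\backslash G$ is compact, so an injection of equal-dimensional spaces will automatically be an isomorphism; the whole task reduces to proving equality of these dimensions.

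Next I would assemble the two total-degree identities. On one side, the graded algebra isomorphism ${\rm Tot}^{\ast}B^{\ast,\ast}_{\Lambda}\cong A^{\ast}_{\Lambda}$ observed just before the corollary, together with the cohomology isomorphism $A^{\ast}_{\Lambda}\cong H^{\ast}(\Gamma\backslash G)$ from the Theorem, yields $\sum_{p+q=k}\dim B^{p,q}_{\Lambda}=\dim A^{k}_{\Lambda}=\dim H^{k}(\Gamma\backslash G)$ for every $k$. On the other side, the assumption that the Fr\"olicher spectral sequence degenerates at $E_{1}$ is exactly the equality case of the Fr\"olicher inequality recalled in the introduction, namely $\sum_{p+q=k}\dim H^{p,q}_{\bar\partial}(\Gamma\backslash G)=\dim H^{k}(\Gamma\backslash G)$.

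Finally I would combine these. Summing the bidegree inequalities over $p+q=k$ gives $\sum_{p+q=k}\dim B^{p,q}_{\Lambda}\le\sum_{p+q=k}\dim H^{p,q}_{\bar\partial}(\Gamma\backslash G)$, but by the two identities both sides equal $\dim H^{k}(\Gamma\backslash G)$, so the sum of the non-negative quantities $\dim H^{p,q}_{\bar\partial}(\Gamma\backslash G)-\dim B^{p,q}_{\Lambda}$ vanishes, forcing each term to vanish. Thus the injection of Lemma \ref{injj} is a surjection in every bidegree, and the inclusion induces the claimed bigraded isomorphism $B^{\ast,\ast}_{\Lambda}\cong H^{\ast,\ast}_{\bar\partial}(\Gamma\backslash G)$. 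I do not anticipate a genuine obstacle here: the argument is a routine dimension count, and the only point requiring care is the bookkeeping that both total-degree sums coincide with the $k$-th Betti number, so that the term-by-term equality really follows from the summed equality.
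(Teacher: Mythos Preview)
Your argument is correct and is exactly the dimension count the paper has in mind: the injection from Lemma~\ref{injj}, combined with ${\rm Tot}^{\ast}B^{\ast,\ast}_{\Lambda}\cong A^{\ast}_{\Lambda}\cong H^{\ast}(\Gamma\backslash G)$ from the Theorem and the equality $\sum_{p+q=k}\dim H^{p,q}_{\bar\partial}=\dim H^{k}$ coming from $E_1$-degeneration, forces equality bidegree by bidegree. The paper merely records the ${\rm Tot}$-isomorphism and states the corollary, so you have simply written out the implicit proof.
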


A complex manifold $M$  is Dolbeault formal if there exists a sequence of differential bigraded  algebra homomorphisms
\[H^{*,*}_{\bar\partial}(M)\leftarrow C^{*,*}_{1}\rightarrow C^{*,*}_{2}\leftarrow\cdot \cdot \cdot \leftarrow C^{*,*}_{n}\rightarrow \Omega^{*,*}(M)
\]
such that each  morphism  induces a cohomology isomorphism where $H^{*,*}_{\bar\partial}(M)$ is regarded as a differential bigraded  algebra with the trivial differential.

By the Proof of Lemma \ref{injj}, bigraded algebra  $B^{\ast,\ast} _{\Lambda}$ consists of $\bar\partial$-harmonic forms associated with  a left-$G$-invariant  Hermitian metric.
Hence we have the following.
\begin{cor}\label{for}
If the Fr\"olicher spectral sequence of the complex manifold $(\Gamma\backslash G, J)$ degenerates at $E_{1}$-term, then 
the complex manifold $(\Gamma\backslash G, J)$ is geometrically Dolbeault formal i.e. it admits Hermitian metric so that every product of $\bar\partial$-harmonic forms  is $\bar\partial$-harmonic,
in particular $(\Gamma\backslash G, J)$ is Dolbeault formal.

\end{cor}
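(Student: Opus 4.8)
The plan is to exhibit the left-$G$-invariant metric $h_{G}$ of Lemma \ref{injj} as the required metric, by showing that under the $E_{1}$-degeneration hypothesis the space of \emph{all} $\bar\partial$-harmonic forms coincides with the subalgebra $B^{\ast,\ast}_{\Lambda}$. Write $\mathcal{H}^{\ast,\ast}_{\bar\partial}$ for the space of $\bar\partial$-harmonic forms of the compact Hermitian manifold $(\Gamma\backslash G, h_{G})$. By Lemma \ref{injj} we already have the inclusion $B^{\ast,\ast}_{\Lambda}\subseteq \mathcal{H}^{\ast,\ast}_{\bar\partial}$, and $B^{\ast,\ast}_{\Lambda}$ is a bigraded subalgebra closed under the wedge product. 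Thus, once I establish the reverse inclusion, every product of harmonic forms will again lie in $B^{\ast,\ast}_{\Lambda}=\mathcal{H}^{\ast,\ast}_{\bar\partial}$ and hence be harmonic, which is exactly geometric Dolbeault formality.

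To obtain the reverse inclusion I would argue by a dimension count. Hodge theory on $(\Gamma\backslash G, h_{G})$ gives the isomorphism $\mathcal{H}^{p,q}_{\bar\partial}\cong H^{p,q}_{\bar\partial}(\Gamma\backslash G)$ for every $(p,q)$, so these finite-dimensional spaces have equal dimension. On the other hand, under the $E_{1}$-degeneration assumption, Corollary \ref{iso} identifies $B^{\ast,\ast}_{\Lambda}$ with $H^{\ast,\ast}_{\bar\partial}(\Gamma\backslash G)$, whence $\dim B^{p,q}_{\Lambda}=\dim H^{p,q}_{\bar\partial}(\Gamma\backslash G)=\dim \mathcal{H}^{p,q}_{\bar\partial}$. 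Combining this equality of dimensions with the inclusion $B^{\ast,\ast}_{\Lambda}\subseteq \mathcal{H}^{\ast,\ast}_{\bar\partial}$ forces $B^{\ast,\ast}_{\Lambda}=\mathcal{H}^{\ast,\ast}_{\bar\partial}$, completing the proof that $h_{G}$ makes $(\Gamma\backslash G, J)$ geometrically Dolbeault formal.

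For the final assertion I would deduce Dolbeault formality from geometric Dolbeault formality in the standard way. Since $\mathcal{H}^{\ast,\ast}_{\bar\partial}=B^{\ast,\ast}_{\Lambda}$ is now a bigraded subalgebra of the Dolbeault complex on which $\bar\partial$ vanishes, the inclusion $\mathcal{H}^{\ast,\ast}_{\bar\partial}\hookrightarrow A^{\ast,\ast}(\Gamma\backslash G)$ is a morphism of differential bigraded algebras inducing an isomorphism on Dolbeault cohomology, while the map sending a harmonic form to its Dolbeault class is a differential bigraded algebra isomorphism $\mathcal{H}^{\ast,\ast}_{\bar\partial}\cong H^{\ast,\ast}_{\bar\partial}(\Gamma\backslash G)$ onto the cohomology equipped with the trivial differential. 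The resulting zig-zag
\[
H^{\ast,\ast}_{\bar\partial}(\Gamma\backslash G)\xleftarrow{\ \sim\ }\mathcal{H}^{\ast,\ast}_{\bar\partial}\hookrightarrow A^{\ast,\ast}(\Gamma\backslash G)
\]
is exactly a chain of the kind demanded by the definition of Dolbeault formality.

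The genuinely essential point, and the only place where the hypothesis is used, is the passage from $B^{\ast,\ast}_{\Lambda}\subseteq \mathcal{H}^{\ast,\ast}_{\bar\partial}$ to the equality $B^{\ast,\ast}_{\Lambda}=\mathcal{H}^{\ast,\ast}_{\bar\partial}$: without $E_{1}$-degeneration the space of harmonic forms could be strictly larger than $B^{\ast,\ast}_{\Lambda}$, and there would then be no reason for it to be closed under products. Since all the analytic content has already been carried out in Lemma \ref{injj} and Corollary \ref{iso}, the remaining argument reduces to this dimension count together with the elementary observation that geometric formality implies formality.
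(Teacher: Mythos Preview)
Your proof is correct and follows exactly the argument the paper has in mind: use Lemma~\ref{injj} to get $B^{\ast,\ast}_{\Lambda}\subseteq\mathcal{H}^{\ast,\ast}_{\bar\partial}$, invoke Corollary~\ref{iso} together with Hodge theory for the dimension count forcing equality, and then read off geometric Dolbeault formality (and hence Dolbeault formality) from the fact that $B^{\ast,\ast}_{\Lambda}$ is a subalgebra. The paper states this in one sentence before Corollary~\ref{for}; you have simply written out the details.
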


\begin{rem}
We notice that $B^{\ast,\ast} _{\Lambda}$  contains   differential forms which are not left-$G$-invariant in general.
We can directly compute left-invariant harmonic $\bar\partial$-forms by using  the Lie algebra complex $\bigwedge \g^{\ast}_{\C}$.
But it may be difficult to compute harmonic $\bar\partial$-forms which are not left-$G$-invariant.
\end{rem}

\begin{rem}\label{sod}
Each $\Gamma\backslash G$ is real smooth $T^{n}$-bundle over $T^{n+2m}$.
This fiber bundle structure  is not holomorphic.
A standard strategy for computing Dolbeault cohomology of complex solvmanifolds is using the spectral sequence of holomorphic fiber bundles with Dolbeault-computable fibers and bases (see \cite{CFK}).
However, we can not have such fiber bundle structure on our solvmanifold $\Gamma\backslash G$ as above.

\end{rem}

\section{Oeljeklaus-Toma manifolds}
Let $K$ be a finite extension field of $\Q$ of degree $s+2t$ ($s>0$, $t>0$).
Suppose $K$ admits embeddings $\sigma_{1},\dots \sigma_{s},\sigma_{s+1},\dots, \sigma_{s+2t}$ into $\C$ such that $\sigma_{1},\dots ,\sigma_{s}$ are real embeddings and $\sigma_{s+1},\dots, \sigma_{s+2t}$ are complex ones satisfying $\sigma_{s+i}=\bar \sigma_{s+i+t}$ for $1\le i\le t$. 
For any $s$ and $t$, we can choose $K$ admitting such embeddings (see \cite{OT}).
Let ${\mathcal O}_{K}$ be the ring of algebraic integers of $K$, ${\mathcal O}_{K}^{\ast}$ the group of units in ${\mathcal O}_{K}$ and 
\[{\mathcal O}_{K}^{\ast\, +}=\{a\in {\mathcal O}_{K}^{\ast}: \sigma_{i}(a)>0 \,\, {\rm for \,\,  all}\,\, 1\le i\le s\}.
\]  
Define $\sigma :{\mathcal O}_{K}\to \R^{s}\times \C^{t}$ by
\[\sigma(a)=(\sigma_{1}(a),\dots ,\sigma_{s}(a),\sigma_{s+1}(a),\dots ,\sigma_{s+t}(a))
\]
for $a\in {\mathcal O}_{K}$.
Define $l:{\mathcal O}_{K}^{\ast\, +}\to \R^{s+t}$ by 
\[l(a)=(\log \vert \sigma_{1}(a)\vert,\dots ,\log \vert \sigma_{s}(a)\vert , 2\log \vert \sigma_{s+1}(a)\vert,\dots ,2\log \vert \sigma_{s+t}(a)\vert)
\]
for $a\in {\mathcal O}_{K}^{\ast\, +}$.
Then by Dirichlet's units theorem, $l({\mathcal O}_{K}^{\ast\, +})$ is a lattice in the vector space $L=\{x\in \R^{s+t}\vert \sum_{i=1}^{s+t} x_{i}=0\}$.
Consider the projection $p:L\to \R^{s}$ given by the first $s$ coordinate functions.
Then we have a  subgroup $U$ with the rank $s$ of ${\mathcal O}_{K}^{\ast\, +}$ such that $p(l(U))$ is a lattice in $\R^{s}$.
Write $l(U)=\Z v_{1}\oplus\dots\oplus \Z v_{s}$ for generators $v_{1},\dots v_{s}$ of $l(U)$.
For the standerd basis $e_{1},\dots ,e_{s+t}$ of $\R^{s+t}$, we have a regular real $s\times s$-matrix $(a_{ij})$ and $s\times t$ real constants  $b_{jk}$ such that
\[v_{i}=\sum_{j=1}^{s} a_{ij}(e_{j}+\sqrt{-1}\sum_{k=1}^{t}b_{jk}e_{s+k})
\]
for any $1\le i\le s$.
Consider the complex upper half plane $H=\{z\in \C: {\rm Im} z>0\}=\R\times \R_{>0}$.
We have the action of $U\ltimes{\mathcal O}_{K}$ on $H^{s}\times \C^{t}$
such that 
\begin{multline*}
(a,b)\cdot (x_{1}+\sqrt{-1}y_{1},\dots ,x_{s}+\sqrt{-1}y_{s}, z_{1},\dots ,z_{t})\\
=(\sigma_{1}(a)x_{1}+\sigma_{1}(b)+\sqrt{-1} \sigma_{1}(a)y_{1}, \dots ,\sigma_{s}(a)x_{s}+\sigma_{s}(b)+\sqrt{-1} \sigma_{s}(a)y_{s},\\
 \sigma_{s+1}(a)z_{1}+\sigma_{s+1}(b),\dots ,\sigma_{s+t}(a)z_{t}+\sigma_{s+t}(b)).
\end{multline*}
In \cite{OT} it is proved that the quotient $H^{s}\times \C^{t}/U\ltimes{\mathcal O}_{K}$ is compact.
We call this complex manifold a  Oeljeklaus-Toma (OT) manifold of type $(s,t)$.
\begin{thm}[\cite{OTH}]\label{OTHH}
The Fr\"olicher spectral sequence of every OT-manifold of  type $(s,t)$ degenerates at $E_{1}$-term.

\end{thm}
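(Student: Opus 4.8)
The plan is to deduce the degeneration from the numerical criterion recalled in the introduction: the Fr\"olicher spectral sequence of a compact complex manifold $M$ degenerates at $E_1$ if and only if $\sum_{p+q=k}\dim H^{p,q}_{\bar\partial}(M)=\dim H^{k}(M)$ for every $k$. First I would present the OT-manifold of type $(s,t)$ as a solvmanifold $\Gamma\backslash G$ of the kind studied in Section 2, with $n=s$ and $m=t$, using the description of \cite{KV}; here the linear functions $\psi_k$ are read off from the complex embeddings $\sigma_{s+1},\dots,\sigma_{s+t}$. With this identification the de Rham cohomology is computed by $A^{*}_{\Lambda}$ via the theorem of \cite{KM,KR,KN}, so that $\dim H^{k}(\Gamma\backslash G)=\dim A^{k}_{\Lambda}$. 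Combining the injection $B^{*,*}_{\Lambda}\hookrightarrow H^{*,*}_{\bar\partial}(\Gamma\backslash G)$ of Lemma \ref{injj} with the graded isomorphism $\mathrm{Tot}^{*}B^{*,*}_{\Lambda}\cong A^{*}_{\Lambda}$ already gives
\[
\dim H^{k}(\Gamma\backslash G)=\sum_{p+q=k}\dim B^{p,q}_{\Lambda}\le \sum_{p+q=k}\dim H^{p,q}_{\bar\partial}(\Gamma\backslash G),
\]
which is merely the Fr\"olicher inequality. The entire content of the theorem is therefore the reverse inequality, i.e.\ that the harmonic forms in $B^{*,*}_{\Lambda}$ already exhaust the Dolbeault cohomology.

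The main step is thus to compute $H^{*,*}_{\bar\partial}(\Gamma\backslash G)$ and to prove that the inclusion $B^{*,*}_{\Lambda}\hookrightarrow H^{*,*}_{\bar\partial}(\Gamma\backslash G)$ is an isomorphism; by Corollary \ref{iso} this is equivalent to the asserted degeneration, but it must be obtained independently. I would carry this out by a finite-dimensional model argument. Writing $G=\R^{n}\ltimes_{\phi}(\R^{n}\oplus\C^{m})$, one analyses the Dolbeault complex of $\Gamma\backslash G$ by expanding differential forms in Fourier modes along the abelian normal factor $\R^{n}\oplus\C^{m}$ (on which $\Delta$ is a lattice), with coefficients functions on the base torus $\Lambda\backslash\R^{n}$. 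Since $\bar\partial$ acts on each monomial $\bar{\alpha}_{I}\wedge\alpha_{J}\wedge\beta_{K}\wedge\bar{\beta}_{L}$ by multiplication by $-\tfrac{1}{2}\Psi_{JKL}(\bar\alpha)$, this expansion splits the complex into a direct sum indexed by the characters $\exp(\Psi_{JKL})$. The summand on which every character appearing is trivial on $\Lambda$ contributes precisely the forms of $B^{*,*}_{\Lambda}$, and the remaining summands should be acyclic, since a twisted Dolbeault complex carrying a genuinely nontrivial character on a torus has no cohomology. Establishing this acyclicity reduces $H^{*,*}_{\bar\partial}(\Gamma\backslash G)$ to $B^{*,*}_{\Lambda}$ and, combined with the first paragraph, forces equality of the total Hodge and Betti numbers in each degree.

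The hard part is exactly the acyclicity of the nontrivial summands. Unlike the completely solvable or nilpotent case, the eigenvalues $e^{\psi_{k}(x)}$ are genuinely complex: $\psi_k$ has nonzero imaginary part, so the one-parameter subgroups act by a combination of rotation and dilation, and the relevant characters are not unitary. Consequently a naive orthogonal Fourier decomposition is unavailable, and one must instead exploit, as in the proof of Lemma \ref{injj}, that unimodularity of $G$ forces $\exp(\Psi_{[n][m][m]}(x))=1$ and that a character trivial on $\Lambda$ is automatically unitary on all of $\R^{n}$. The technical heart is to promote this observation from the level of harmonic representatives to an exact computation of the cohomology of each character summand, i.e.\ to show that no further, non-invariant Dolbeault classes survive; this is where a careful solvmanifold model theorem (in the spirit of \cite{CFK}, but adapted to the non-holomorphic fibration noted in Remark \ref{sod}) is required. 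Once this vanishing is secured the degeneration is immediate, and as a by-product one recovers the explicit harmonic representatives used throughout the rest of the paper.
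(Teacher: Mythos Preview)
The paper does not prove this theorem at all: it is quoted verbatim from \cite{OTH} and used as a black box. The logical flow in the paper is the \emph{reverse} of what you propose: Otiman--Toma establish the $E_1$-degeneration by their own methods (Hodge decomposition for Cousin groups), and only then does Corollary~\ref{iso} upgrade the injection of Lemma~\ref{injj} to the isomorphism $B^{*,*}_{\Lambda}\cong H^{*,*}_{\bar\partial}(\Gamma\backslash G)$. So there is no ``paper's own proof'' to compare against here.

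Your proposal, by contrast, tries to prove the degeneration from scratch by first proving the Dolbeault isomorphism directly. You correctly identify that this is equivalent to the theorem and that the entire content lies in showing the nontrivial-character summands are $\bar\partial$-acyclic. But you do not actually carry this out: the third paragraph only restates the difficulty (non-unitary characters, no holomorphic fibration available per Remark~\ref{sod}) and says a ``careful solvmanifold model theorem \dots\ is required''. That is precisely the missing step. The observation from Lemma~\ref{injj} that characters trivial on $\Lambda$ are unitary on $\R^n$ tells you something about the \emph{surviving} summand, not about the vanishing of the others; promoting it to an acyclicity statement for genuinely non-unitary twists is the whole problem, and nothing in Section~2 of this paper supplies it. In short, your outline is a correct reduction, but the reduction leaves exactly the theorem of \cite{OTH} unproved.
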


As in \cite{KV}, we present OT-manifolds as solvmanifolds considered  in the last section.
For $a \in U$ and $(x_{1},\dots ,x_{s})=p(l(a))\in p(l(U))$, since $l(U)$ is generated by the basis $v_{1},\dots,v_{s}$ as above, $l(a)$ is a linear combination of $e_{1}+\sum_{k=1}^{t}b_{1k}e_{s+k},\dots ,e_{s}+\sum_{k=1}^{t}b_{sk}e_{s+k}$ and hence we have 
\[l(a)=\sum_{i=1}^{s}x_{i}(e_{i}+\sum_{k=1}^{t}b_{ik}e_{s+k})=(x_{1},\dots ,x_{s}, \sum_{i=1}^{s}b_{i1}x_{i},\dots,\sum_{i=1}^{s}b_{it}x_{i}).
\]
By $2\log \vert \sigma_{s+k}(a)\vert=\sum_{i=1}^{s}b_{ik}x_{i}$, we can write 
\[\sigma_{s+k}(a)=e^{\frac{1}{2}\sum_{i=1}^{s}b_{ik}x_{i}+\sqrt{-1}\sum_{i=1}^{s}c_{ik}x_{i}}\]
for some $c_{ik}\in \R$.
We consider the Lie group $G=\R^{s}\ltimes_{\phi} (\R^{s}\times \C^{t})$ with
\[
\phi(x_{1},\dots ,x_{s})\\
={\rm diag}(e^{x_{1}},\dots ,e^{x_{s}},e^{\psi_{1}(x)},\dots ,e^{\psi_{t}(x)})\]
 where $\psi_{k}=\frac{1}{2}\sum_{i=1}^{s}b_{ik}x_{i}+\sum_{i=1}^{s}c_{ik}x_{i}$.
Then for $(x_{1},\dots ,x_{s})\in p(l(U))$, we have 
\[\phi(x_{1},\dots ,x_{s})(\sigma ({\mathcal O}_{K}))\subset \sigma({\mathcal O}_{K}).\]
Write $ p(l(U))=\Lambda$ and $ \sigma({\mathcal O}_{K})=\Delta$.
Then the OT-manifold $H^{s}\times \C^{t}/U\ltimes{\mathcal O}_{K}$ is identified with one of  the complex solvmanifolds  $(\Gamma\backslash G, J)$ as in the last section.
We consider the differential bigraded subalgebra  $B^{\ast,\ast}_{\Lambda}$ of Dolbeault complex of  OT-manifold $H^{s}\times \C^{t}/U\ltimes{\mathcal O}_{K}=\Gamma\backslash G$ with this solvmanifold presentation.

\begin{rem}\label{der}
We can compute the de Rham cohomology of OT-manifold $H^{s}\times \C^{t}/U\ltimes{\mathcal O}_{K}=\Gamma\backslash G$ by 
the finite dimensional graded subspace 
\[A^{\ast} _{\Lambda}= \left \langle dx_{I}\wedge dy_{J}\wedge dz_{K}\wedge d\bar{z}_{L}\left \vert\begin{array}{cc} I, J\subset [s], K,L\subset [t],  \\ \exp\left(\sum_{j\in J}x_{j}+\sum_{k\in K}\psi_{k}(x)+\sum_{l\in L}\bar{\psi}_{l}(x)\right)\vert _{x\in \Lambda}=1  \end{array} \right.   \right\rangle.
\] 
of $A^{\ast}(\Gamma\backslash G)$.
Since for $a\in U$ with $(x_{1},\dots ,x_{s})=p(l(a))\in p(l(U))$ we have $\sigma_{i}(a)=e^{x_{i}}$ for $1\le i\le s$,  $\sigma_{s+k}(a)=e^{\psi_{k}(x)}$ and $\sigma_{s+t+k}(a)=e^{\bar\psi_{k}(x)}$ for  $1\le k\le s$,
the condition $ \exp\left(\sum_{j\in J}x_{j}+\sum_{k\in K}\psi_{k}(x)+\sum_{l\in L}\bar{\psi}_{l}(x)\right)=1 $ is equivalent to 
the condition $\sigma_{JKL}(a)=1$
where we write $\sigma_{IJK}(a)=\Pi_{i}\sigma_{i}(a)\Pi_{j\in J}\sigma_{s+j}\Pi_{k\in K}\sigma_{s+t+k}(a)$.
Thus we obtain 
\[H^{\ast}(\Gamma\backslash G)_{\C}\cong A^{\ast} _{\Lambda}=
\bigwedge \langle dx_{1},\dots, dx_{s}\rangle \otimes \left \langle  dy_{J}\wedge dz_{K}\wedge d\bar{z}_{L}\left \vert\begin{array}{cc}  J\subset [s], K,L\subset [t],  \\ \sigma_{IJK}(a)=1, \forall a\in U  \end{array} \right.   \right\rangle
\] 
This is another  proof of  \cite[Theorem 3.1]{IO} by using the solvmanifold presentation of OT-manifolds.
\end{rem}

Then by Theorem \ref{OTHH} and Corollary \ref{iso}, we have:
\begin{thm}
The inclusion  $B^{\ast,\ast} _{\Lambda}\subset A^{\ast,\ast} (\Gamma\backslash G)$ induces an isomorphism $B^{\ast,\ast} _{\Lambda}\cong H^{\ast,\ast}_{\bar\partial} (\Gamma\backslash G)$.

\end{thm}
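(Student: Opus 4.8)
The plan is to deduce the statement directly from the two results already established, so the whole argument reduces to checking that both apply to the manifold at hand. The essential preliminary observation is that the OT-manifold $H^{s}\times \C^{t}/U\ltimes{\mathcal O}_{K}$ has, following \cite{KV}, just been presented as a complex solvmanifold $(\Gamma\backslash G, J)$ of exactly the type treated in Section 2: the group is $G=\R^{s}\ltimes_{\phi}(\R^{s}\times \C^{t})$ with $\phi$ built from the linear functions $\psi_{k}$, and the lattices are $\Lambda=p(l(U))$ and $\Delta=\sigma({\mathcal O}_{K})$, with $\phi(\Lambda)$ preserving $\Delta$. Consequently the bigraded subalgebra $B^{\ast,\ast}_{\Lambda}$, the invariant Hermitian metric $h_{G}$, Lemma \ref{injj}, and Corollary \ref{iso} are all available for this manifold without change.

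First I would record the injectivity half, which is already in hand: by Lemma \ref{injj} the generators of $B^{\ast,\ast}_{\Lambda}$ are $\bar\partial$-harmonic for $h_{G}$, so the inclusion induces an injection $B^{\ast,\ast}_{\Lambda}\hookrightarrow H^{\ast,\ast}_{\bar\partial}(\Gamma\backslash G)$. To promote this to an isomorphism one only needs the matching of total dimensions, and here Theorem \ref{OTHH} supplies exactly the required input: the Fr\"olicher spectral sequence of every OT-manifold degenerates at $E_{1}$, which is precisely the hypothesis of Corollary \ref{iso}.

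The conclusion is then immediate. Because $\mathrm{Tot}^{\ast}B^{\ast,\ast}_{\Lambda}\cong A^{\ast}_{\Lambda}$ as graded algebras and $A^{\ast}_{\Lambda}$ computes the full de Rham cohomology, the total dimension of $B^{\ast,\ast}_{\Lambda}$ equals $\dim H^{\ast}(\Gamma\backslash G)_{\C}$; the $E_{1}$-degeneration forces $\sum_{p+q=k}\dim H^{p,q}_{\bar\partial}(\Gamma\backslash G)=\dim H^{k}(\Gamma\backslash G)$, so the total dimensions of $B^{\ast,\ast}_{\Lambda}$ and of $H^{\ast,\ast}_{\bar\partial}(\Gamma\backslash G)$ agree. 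An injection between finite-dimensional bigraded spaces of equal total dimension is an isomorphism; this is exactly Corollary \ref{iso} applied to $(\Gamma\backslash G,J)$.

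I do not anticipate a genuine obstacle inside this theorem, since all the difficulty has been front-loaded into the two cited inputs, namely the harmonicity computation of Lemma \ref{injj} and the deep degeneration result Theorem \ref{OTHH}. The one compatibility worth double-checking is that the solvmanifold structure really does reproduce the OT complex structure: that the $(1,0)$-forms $\alpha_{i},\beta_{k}$ of Section 2 correspond, under the biholomorphism $G\cong H^{s}\times\C^{t}$, to the standard holomorphic coordinates, so that Lemma \ref{injj} and Theorem \ref{OTHH} genuinely refer to the same Dolbeault complex. Once this identification is in place, the proof is the one-line combination stated above.
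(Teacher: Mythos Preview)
Your proposal is correct and follows exactly the paper's own argument: the theorem is stated as an immediate consequence of Theorem~\ref{OTHH} and Corollary~\ref{iso}, once the OT-manifold is presented as a solvmanifold $(\Gamma\backslash G,J)$ of the type in Section~2. Your additional unpacking of the dimension count behind Corollary~\ref{iso} is accurate but not needed, since that corollary is already established in full generality.
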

\begin{rem}
As noted in Remark \ref{sod}, this may be considered as a development  of computations of  Dolbeault cohomology of solvmanifolds.
\end{rem}

As noted in  Remark \ref{der}, we have:

\begin{cor}
\[H^{\ast,\ast}_{\bar\partial}(\Gamma\backslash G)\cong B^{\ast} _{\Lambda}=
\bigwedge \langle \bar\alpha_{1},\dots, \bar\alpha_{s}\rangle \otimes \left \langle  \exp(\Psi_{JKL}(x))\alpha_{J}\wedge \beta_{K}\wedge \bar\beta_{L}\left \vert\begin{array}{cc}  J\subset [s], K,L\subset [t],  \\ \sigma_{IJK}(a)=1, \forall a\in U  \end{array} \right.   \right\rangle
\] 
and hence
\[ \dim H^{p,q}_{\bar\partial}(\Gamma\backslash G)_{\C}= \sum_{p_{1}+p_{2}=p,\,\, q_{1}+q_{2}=q} \left (\begin{array}{cc} 
s\\
q_{1}
\end{array} \right)\rho_{p_{1}p_{2}q_{2}}
\]
where $\rho_{p_{1}p_{2}q_{2}}$ is the cardinal of the set 
\[\left\{ (J,K,L)\left\vert\begin{array}{cc} J\subset [s], K,L\subset [t], \\
 \vert J\vert=p_{1}, \vert K\vert=p_{2}, \vert L\vert=q_{2},\\
  \sigma_{IJK}(a)=1, \forall a\in U\end{array} \right. \right\}.\]
\end{cor}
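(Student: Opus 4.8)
The plan is to deduce the statement directly from the preceding theorem, which already identifies $B^{\ast,\ast}_{\Lambda}$ with $H^{\ast,\ast}_{\bar\partial}(\Gamma\backslash G)$ as bigraded algebras; what remains is to unravel the defining condition of $B^{\ast,\ast}_{\Lambda}$ and to read off the bidegrees.

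First I would observe that in the defining datum of $B^{\ast,\ast}_{\Lambda}$ the function $\Psi_{JKL}(x)=\sum_{j\in J}x_{j}+\sum_{k\in K}\psi_{k}(x)+\sum_{l\in L}\bar\psi_{l}(x)$, and hence the constraint $\exp(\Psi_{JKL}(x))\vert_{x\in\Lambda}=1$, involves only the multi-indices $J,K,L$ and not the index $I$ of the antiholomorphic factor $\bar\alpha_{I}$. Indeed each $\bar\alpha_{i}$ is left-$G$-invariant and satisfies $\bar\partial\bar\alpha_{i}=0$ with trivial exponential weight. Consequently every admissible basis monomial factors as $\bar\alpha_{I}\wedge\bigl(\exp(\Psi_{JKL}(x))\alpha_{J}\wedge\beta_{K}\wedge\bar\beta_{L}\bigr)$, where the set of admissible second factors is independent of the freely chosen $I\subset[s]$. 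This yields the bigraded algebra splitting $B^{\ast,\ast}_{\Lambda}\cong\bigwedge\langle\bar\alpha_{1},\dots,\bar\alpha_{s}\rangle\otimes B'$, with $B'$ spanned by the constrained forms $\exp(\Psi_{JKL}(x))\alpha_{J}\wedge\beta_{K}\wedge\bar\beta_{L}$.

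Next I would translate the analytic constraint into the arithmetic one using Remark \ref{der}. For $a\in U$ with $p(l(a))=(x_{1},\dots,x_{s})$ one has $\sigma_{i}(a)=e^{x_{i}}$, $\sigma_{s+k}(a)=e^{\psi_{k}(x)}$ and $\sigma_{s+t+k}(a)=e^{\bar\psi_{k}(x)}$; hence $\exp(\Psi_{JKL}(x))\vert_{x\in\Lambda}=1$ is exactly the condition $\sigma_{IJK}(a)=1$ for all $a\in U$, i.e. the product of the embeddings indexed by $J,K,L$ being trivial. Substituting this into the description of $B'$ gives the displayed formula for $H^{\ast,\ast}_{\bar\partial}(\Gamma\backslash G)$.

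Finally I would compute dimensions by tracking bidegrees across the two tensor factors. Each $\bar\alpha_{i}$ is a $(0,1)$-form, so $\bar\alpha_{I}$ with $\vert I\vert=q_{1}$ lives in bidegree $(0,q_{1})$ and contributes the factor $\binom{s}{q_{1}}$. In the second factor $\alpha_{J}$ and $\beta_{K}$ are $(1,0)$-forms while $\bar\beta_{L}$ is a $(0,1)$-form, so $\exp(\Psi_{JKL}(x))\alpha_{J}\wedge\beta_{K}\wedge\bar\beta_{L}$ has bidegree $(\vert J\vert+\vert K\vert,\vert L\vert)=(p_{1}+p_{2},q_{2})$. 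Summing over the ways a monomial of total bidegree $(p,q)$ can arise, that is over $p_{1}+p_{2}=p$ and $q_{1}+q_{2}=q$, and collecting the constrained triples into the count $\rho_{p_{1}p_{2}q_{2}}$, produces the asserted dimension formula. The argument is entirely mechanical once the tensor splitting is in place; the only point demanding care is the bidegree bookkeeping, together with the observation---already the crux of the first step---that the defining condition is blind to $I$, so that the antiholomorphic exterior algebra $\bigwedge\langle\bar\alpha_{1},\dots,\bar\alpha_{s}\rangle$ genuinely splits off.
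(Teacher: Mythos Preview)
Your proposal is correct and follows exactly the route the paper takes: it invokes the preceding theorem for the isomorphism $B^{\ast,\ast}_{\Lambda}\cong H^{\ast,\ast}_{\bar\partial}(\Gamma\backslash G)$, then uses Remark~\ref{der} to convert the condition $\exp(\Psi_{JKL}(x))\vert_{x\in\Lambda}=1$ into the arithmetic condition on the $\sigma_{i}(a)$. The paper states the corollary without further argument, so your write-up simply makes explicit the tensor splitting (via the independence of the constraint from $I$) and the bidegree count that are implicit there.
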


By Corollary \ref{for}, we have:
\begin{thm}
 Every OT-manifold of  type $(s,t)$ is geometrically Dolbeault formal in particular Dolbeault formal.

\end{thm}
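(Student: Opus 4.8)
The plan is to reduce the statement directly to Corollary \ref{for} by way of the solvmanifold presentation set up earlier in this section. First I would recall that every OT-manifold $H^{s}\times \C^{t}/U\ltimes {\mathcal O}_{K}$ of type $(s,t)$ has been identified with a complex solvmanifold $(\Gamma\backslash G, J)$ of exactly the type treated in Section 2: one takes $G=\R^{s}\ltimes_{\phi}(\R^{s}\times \C^{t})$ with $\phi$ determined by the linear functions $\psi_{k}=\frac{1}{2}\sum_{i}b_{ik}x_{i}+\sum_{i}c_{ik}x_{i}$, and the lattices $\Lambda=p(l(U))$ and $\Delta=\sigma({\mathcal O}_{K})$ satisfying the required $\phi$-invariance. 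This places $(\Gamma\backslash G, J)$ squarely within the framework where the bigraded subalgebra $B^{\ast,\ast}_{\Lambda}$ and the left-$G$-invariant Hermitian metric $h_{G}$ of Lemma \ref{injj} are both defined.

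Next I would invoke Theorem \ref{OTHH}, which asserts that the Fr\"olicher spectral sequence of every OT-manifold of type $(s,t)$ degenerates at the $E_{1}$-term. This supplies precisely the hypothesis needed to apply Corollary \ref{for}. That corollary then yields at once that $(\Gamma\backslash G, J)$ carries a Hermitian metric, namely $h_{G}$, for which the product of any two $\bar\partial$-harmonic forms is again $\bar\partial$-harmonic; in other words, the OT-manifold is geometrically Dolbeault formal, and hence Dolbeault formal.

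The real content sits one level below, in Lemma \ref{injj} and Corollary \ref{iso}, and is worth keeping in view while writing the argument: under $E_{1}$-degeneration the inclusion $B^{\ast,\ast}_{\Lambda}\hookrightarrow H^{\ast,\ast}_{\bar\partial}(\Gamma\backslash G)$ becomes an isomorphism, while by Lemma \ref{injj} the forms in $B^{\ast,\ast}_{\Lambda}$ are already $h_{G}$-harmonic; a dimension comparison then forces the full space of $\bar\partial$-harmonic forms to coincide with $B^{\ast,\ast}_{\Lambda}$, which is closed under the wedge product, so the harmonic forms constitute an algebra. I do not expect a genuine obstacle at the level of the final theorem itself, since it is an immediate combination of Theorem \ref{OTHH} and Corollary \ref{for}; the only point requiring care is the verification that the OT solvmanifold presentation truly lies in the class of Section 2, so that $B^{\ast,\ast}_{\Lambda}$ and $h_{G}$ are genuinely available, and this has already been arranged in the construction preceding the statement.
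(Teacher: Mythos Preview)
Your proposal is correct and matches the paper's approach exactly: the theorem is stated as an immediate consequence of Corollary~\ref{for} once one knows that OT-manifolds sit in the solvmanifold class of Section~2 and that their Fr\"olicher spectral sequence degenerates at $E_{1}$ (Theorem~\ref{OTHH}). Your additional paragraph unpacking the mechanism behind Corollary~\ref{for} is accurate and helpful, though the paper itself does not spell this out at the point of the theorem.
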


\begin{rem}
The formality of OT-manifolds  for de Rham complex is proved  in \cite{KF}.

\end{rem}

\section{Bott-Chern cohomology of OT-manifolds of type $(s,1)$}
Let $G$ be a simply connected solvable Lie group and $\g$ the Lie algebra of $G$. 
Suppose $G$ admits  a left-invariant complex structure $J$.
Then  we have the double complex structure $(\bigwedge \g^{\ast}_{\C}=\bigwedge^{\ast,\ast} \g^{\ast}_{\C}, d=\partial+\bar\partial)$ on the complex Lie algebra complex $\bigwedge \g^{\ast}_{\C}$.
We define the Lie algebra  Dolbeault cohomology $H^{\ast,\ast}_{\bar\partial}(\g)$.
As similar to the usual Dolbeault cohomology we have the  inequality $\sum_{p+q=k}\dim H^{p,q}_{\bar\partial}(\g)\ge \dim H^{k}(\g)$.
Suppose $G$ contains a cocompact discrete subgroup $\Gamma$.
then the inclusion $\bigwedge^{\ast,\ast} \g^{\ast}_{\C}\subset  A^{\ast,\ast} (\Gamma\backslash G)$ induces a canonical map $H^{\ast,\ast}_{\bar\partial}(\g)\to H^{\ast,\ast}_{\bar\partial}(\Gamma\backslash G)$.
This map is not an isomorphism in general but always an injection (cf \cite[Lemma 9]{CF}).
Hence we have the  inequalities 
\[  \sum_{p+q=k}\dim H^{p,q}_{\bar\partial}(\Gamma\backslash G) \ge\sum_{p+q=k}\dim H^{p,q}_{\bar\partial}(\g)\ge \dim H^{k}(\g).
\]
If the Fr\"olicher spectral sequence of $\Gamma\backslash G$ degenerates 
at $E_{1}$-term (i.e. $ \sum_{p+q=k}\dim H^{p,q}_{\bar\partial}(\Gamma\backslash G)=\dim H^{k}(\Gamma\backslash G))$ and an isomorphism $H^{\ast}(\g)\cong H^{\ast}(\Gamma\backslash G)$ holds, then the injection $H^{\ast,\ast}_{\bar\partial}(\g)\to H^{\ast,\ast}_{\bar\partial}(\Gamma\backslash G)$ is an isomorphism.

For  the double complex  $(\bigwedge \g^{\ast}_{\C}=\bigwedge^{\ast,\ast} \g^{\ast}_{\C}, d=\partial+\bar\partial)$, we define the Lie algebra Bott-Chern cohomology  $H^{\ast,\ast}_{BC}(\g)$.
By the result in \cite{AK}, if we have isomorphisms $H^{\ast}(\g)\cong H^{\ast}(\Gamma\backslash G)$ and $H^{\ast,\ast}_{\bar\partial}(\g)\cong H^{\ast,\ast}_{\bar\partial}(\Gamma\backslash G)$, then 
the  inclusion $\bigwedge^{\ast,\ast} \g^{\ast}_{\C}\subset  A^{\ast,\ast} (\Gamma\backslash G)$ induces an isomorphism 
 $H^{\ast,\ast}_{BC}(\g)\cong  H^{\ast,\ast}_{BC}(\Gamma\backslash G)$.

We consider an  OT-manifold $H^{s}\times \C^{t}/U\ltimes{\mathcal O}_{K}=\Gamma\backslash G$ as the last section.
We assume $t=1$.
Then, we have $\psi_{1}=\frac{1}{2}\sum_{i=1}^{s}x_{i}+\sqrt{-1}\sum_{i=1}^{s}c_{i1}x_{i}$.
It is known that an isomorphism $H^{\ast}(\g)\cong H^{\ast}(\Gamma\backslash G)$ holds see \cite[Section 10]{KM}.
Thus, by Theorem \ref{OTHH} and the above argument, the  inclusion $\bigwedge^{\ast,\ast} \g^{\ast}_{\C}\subset  A^{\ast,\ast} (\Gamma\backslash G)$ induces an isomorphism 
 $H^{\ast,\ast}_{BC}(\g)\cong  H^{\ast,\ast}_{BC}(\Gamma\backslash G)$.
 
 By the arguments in the last section, we have
 \[H^{\ast}(\Gamma\backslash G)_{\C}\cong A^{\ast} _{\Lambda}=
\bigwedge \langle dx_{1},\dots, dx_{s}\rangle \otimes \left   \langle 1,  dy_{[s]}\wedge dz_{1}\wedge d\bar{z}_{1}\  \right\rangle
\] 
and 
 \[H^{\ast,\ast}_{\bar\partial}(\Gamma\backslash G)\cong B^{\ast} _{\Lambda}=
\bigwedge \langle \bar\alpha_{1},\dots, \bar\alpha_{s}\rangle \otimes \left \langle 1, \alpha_{[s]}\wedge \beta_{1}\wedge \bar\beta_{1} \right\rangle
\] 
and hence
\[\dim H^{p,q}_{\bar\partial}(\Gamma\backslash G)=\left \{\begin{array}{cccc}
\left (\begin{array}{cc} 
s\\
q
\end{array} \right)  \qquad (p=0)\\
\left (\begin{array}{cc} 
s\\
q-1
\end{array} \right) \qquad (p=s+1)\\
1 \qquad\qquad (p=q=0)\\
0 \qquad\qquad (\rm otherwise)
    \end{array} \right. .
\]

We compute $H^{\ast,\ast}_{BC}(\Gamma\backslash G)$.
We should notice that we have the elliptic differential operator $\tilde\Delta_{BC}$ corresponding to the Bott-Chern cohomology of a Hermitian manifold (see \cite{S}).
It is known that a differential form $\alpha$ is BC-harmonic (i.e. $\tilde\Delta_{BC}\alpha=0$) if and only if $\partial\alpha=\bar\partial \alpha=(\partial\bar\partial)^{\ast}\alpha=0$ where $(\partial\bar\partial)^{\ast}$ is the formal adjoint of $\partial\bar\partial$.
As usual Hodge theory, we have a unique  BC-harmonic  representative of every cohomology class in the Bott-Chern cohomology.

\begin{prop}
\[H^{p,q}_{BC}(\Gamma\backslash G)=\left \{\begin{array}{cccc} \langle \alpha_{I}\wedge\bar\alpha_{[s]}\wedge \beta_{1}\wedge\bar\beta_{1}\vert I\subset [s], \vert I\vert=p-1 \rangle \qquad (q=s+1)
  \\ \langle \alpha_{[s]}\wedge\bar\alpha_{J}\wedge \beta_{1}\wedge\bar\beta_{1}\vert J\subset [s], \vert J\vert=q-1 \rangle \qquad (p=s+1)\\
  \langle \alpha_{1}\wedge \bar\alpha_{1},\dots ,  \alpha_{s}\wedge \bar\alpha_{s}\rangle \qquad (p=q=1)
  \\
 \langle   1 \rangle  \qquad\qquad (p=q=0)
  \\
  0 \qquad\qquad (\rm otherwise)
    \end{array} \right. 
\]
and hence
\[\dim H^{p,q}_{BC}(\Gamma\backslash G)=\left \{\begin{array}{cccc}
\left (\begin{array}{cc} 
s\\
p-1
\end{array} \right)  \qquad (q=s+1)\\
\left (\begin{array}{cc} 
s\\
q-1
\end{array} \right) \qquad (p=s+1)\\
s  \qquad  \qquad (p=q=1)\\
1 \qquad\qquad (p=q=0)\\
0 \qquad\qquad (\rm otherwise)
    \end{array} \right. .
\]
Moreover, each representative as above is BC-harmonic on the Hermitian manifold $(\Gamma\backslash G,h_{G})$ where $h_{G}$ is defined in Lemma \ref{injj}.
\end{prop}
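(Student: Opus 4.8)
The strategy is to reduce the computation to the finite-dimensional Lie algebra double complex and then solve the Bott--Chern harmonicity equations in each bidegree. By the discussion preceding the statement we already have $H^{\ast,\ast}_{BC}(\g)\cong H^{\ast,\ast}_{BC}(\Gamma\backslash G)$. Since $h_{G}$ is left-invariant, the operator $\tilde\Delta_{BC}$ preserves the subspace $\bigwedge^{\ast,\ast}\g^{\ast}_{\C}$ of left-invariant forms, and the left-invariant $BC$-harmonic forms realize $H^{\ast,\ast}_{BC}(\g)$; by uniqueness of the $BC$-harmonic representative these invariant harmonic forms are exactly the harmonic representatives of the classes of $H^{\ast,\ast}_{BC}(\Gamma\backslash G)$. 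Hence it suffices to find, in each bidegree, all invariant $\omega$ with $\partial\omega=\bar\partial\omega=(\partial\bar\partial)^{\ast}\omega=0$.

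First I would record the $\partial$-structure equations obtained by conjugating those listed before Lemma \ref{injj}, namely $\partial\alpha_{i}=0$, $\partial\bar\alpha_{i}=-\tfrac12\alpha_{i}\wedge\bar\alpha_{i}$, $\partial\beta_{1}=-\tfrac12\psi_{1}(\alpha)\wedge\beta_{1}$ and $\partial\bar\beta_{1}=-\tfrac12\bar\psi_{1}(\alpha)\wedge\bar\beta_{1}$, where $\psi_{1}(\alpha)=\sum_{i}\mu_{i}\alpha_{i}$. Next I would rewrite the third condition by means of the conjugate Hodge star $\bar\ast$ already computed in the proof of Lemma \ref{injj}: from $\partial^{\ast}=-\bar\ast\bar\partial\bar\ast$, $\bar\partial^{\ast}=-\bar\ast\partial\bar\ast$ and $\bar\ast\bar\ast=\pm\mathrm{id}$ one obtains $(\partial\bar\partial)^{\ast}=\pm\,\bar\ast\,\partial\bar\partial\,\bar\ast$, so that $(\partial\bar\partial)^{\ast}\omega=0$ is equivalent to $\partial\bar\partial(\bar\ast\omega)=0$. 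As $\bar\ast$ merely sends each orthonormal monomial $\bar\alpha_{I}\wedge\alpha_{J}\wedge\beta_{K}\wedge\bar\beta_{L}$ to its complement, all three equations become explicit linear relations among the coefficients of $\omega$.

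The decisive algebraic input is the unimodularity relation already used in Lemma \ref{injj}, i.e. $\Psi_{[s][1][1]}\equiv0$, which in the present notation reads $1+\mu_{i}+\bar\mu_{i}=0$ for every $i$. This is exactly what makes the listed forms closed: for the family $\alpha_{I}\wedge\beta_{1}\wedge\bar\alpha_{[s]}\wedge\bar\beta_{1}$ (with $q=s+1$) one has $\bar\partial\omega=0$ automatically, while the three contributions to $\partial\omega$ produced by $\partial\beta_{1}$, $\partial\bar\alpha_{[s]}$ and $\partial\bar\beta_{1}$ carry coefficients $\mu_{j}$, $1$ and $\bar\mu_{j}$ and cancel by $1+\mu_{j}+\bar\mu_{j}=0$; the family with $p=s+1$ is symmetric, and $\alpha_{i}\wedge\bar\alpha_{i}$ is closed because repeated indices annihilate every derivative term. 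For harmonicity one checks that $\bar\ast$ carries $\alpha_{I}\wedge\beta_{1}\wedge\bar\alpha_{[s]}\wedge\bar\beta_{1}$ to a pure monomial $\pm\alpha_{[s]\setminus I}$ and $\alpha_{i}\wedge\bar\alpha_{i}$ to $\pm\alpha_{[s]\setminus i}\wedge\beta_{1}\wedge\bar\alpha_{[s]\setminus i}\wedge\bar\beta_{1}$; in every case $\partial\bar\partial(\bar\ast\omega)=0$, because $\partial\bar\partial$ of a pure $\alpha$- or $\bar\alpha$-monomial vanishes and on the mixed complements it reduces to $\partial$ (resp. $\bar\partial$) of an already-closed extreme form.

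The main obstacle is the exhaustiveness, not the verification: one must show that the displayed forms are the only invariant solutions of the harmonic system in every bidegree, and in particular that all intermediate bidegrees contribute nothing. This is where care is needed, since $\partial$ and $\bar\partial$ move an $\alpha_{i}$ (resp. $\bar\alpha_{i}$) into $\alpha_{i}\wedge\bar\alpha_{i}$ and the factors $\beta_{1},\bar\beta_{1}$ drag in the linear forms $\psi_{1}(\bar\alpha),\bar\psi_{1}(\bar\alpha)$, so the closedness equations couple the coefficients nontrivially. I would resolve this by solving the coupled linear system directly, organizing monomials by whether they contain $\beta_{1}$ and $\bar\beta_{1}$ and by the overlap of their $\alpha$- and $\bar\alpha$-index sets, and then cross-checking the resulting dimensions against the already-known de Rham and Dolbeault numbers together with the $E_{1}$-degeneration of the Fr\"olicher spectral sequence, so that no bidegree can hide an unaccounted class.
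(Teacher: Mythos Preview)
Your approach is essentially the paper's: reduce to the Lie algebra, use $(\partial\bar\partial)^{\ast}=\pm\bar\ast\,\partial\bar\partial\,\bar\ast$, and solve the three linear conditions. Your verification that the listed forms are $\partial$- and $\bar\partial$-closed and satisfy $\partial\bar\partial(\bar\ast\omega)=0$ is correct and matches the paper, including the use of the unimodularity identity.

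Two remarks on the exhaustiveness step. First, the paper does not leave it as an unsolved ``coupled linear system'': it records the intermediate result that $\ker\partial\cap\ker\bar\partial\subset\bigwedge^{\ast,\ast}\g^{\ast}_{\C}$ is exactly the span of the monomials $\alpha_{I}\wedge\bar\alpha_{I}$ and of the monomials $\alpha_{I}\wedge\bar\alpha_{J}\wedge\beta_{1}\wedge\bar\beta_{1}$ with $\check I\subset J$. Once this is known, applying $\partial\bar\partial\bar\ast$ to each such monomial immediately yields $|I|\le 1$ in the first family and $I=[s]$ or $J=[s]$ in the second, giving precisely the list in the statement. Your organization ``by whether they contain $\beta_{1},\bar\beta_{1}$ and by the overlap of their $\alpha$- and $\bar\alpha$-index sets'' is exactly what produces this description, so you should state and prove it rather than leave it implicit.

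Second, your proposed cross-check against the de Rham and Dolbeault numbers via the $E_{1}$-degeneration is not a valid safeguard here: those numbers constrain $H^{\ast,\ast}_{\bar\partial}$, not $H^{\ast,\ast}_{BC}$, and there is no a priori equality linking $\dim H^{p,q}_{BC}$ to Dolbeault or de Rham dimensions in a fixed bidegree (only the Angella--Tomassini inequality, which involves both $H_{BC}$ and $H_{A}$). So the exhaustiveness really must come from the direct linear-algebra computation, as in the paper.
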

\begin{proof}
We compute $ \rm ker\partial \cap ker \bar\partial\subset \bigwedge^{\ast,\ast} \g^{\ast}_{\C}$. 
$\bigwedge^{\ast,\ast} \g^{\ast}_{\C}$ is generated  four  type monomials
$\alpha_{I}\wedge\bar\alpha_{J}$, $\alpha_{I}\wedge\bar\alpha_{J}\wedge\beta_{1}$, $\alpha_{I}\wedge\bar\alpha_{J}\wedge\bar\beta_{1}$ and $\alpha_{I}\wedge\bar\alpha_{J}\wedge\beta_{1}\wedge\bar\beta_{1}$.
We have
\[\partial (\alpha_{I}\wedge\bar\alpha_{J})=-\frac{1}{2}\sum_{j\in J}\alpha_{j}\wedge\alpha_{I}\wedge \bar\alpha_{J}, \qquad \bar\partial (\alpha_{I}\wedge\bar\alpha_{J})=-\frac{1}{2}\sum_{i\in I}\bar\alpha_{i}\wedge\alpha_{I}\wedge \bar\alpha_{J},
\] 
\[\partial \beta_{1}=\frac{1}{4}\sum_{i\in [n]}\alpha_{i}\wedge \beta_{1}+\frac{1}{2}\sqrt{-1}\sum_{i\in [n]}c_{i1}\alpha_{i}\wedge \beta_{1},\qquad \partial\bar\beta_{1}=\frac{1}{4}\sum_{i\in [n]}\alpha_{i}\wedge \bar\beta_{1}-\frac{1}{2}\sqrt{-1}\sum_{i\in [n]}c_{i1}\alpha_{i}\wedge\bar \beta_{1}
\]
and 
\[\bar\partial \beta_{1}=\frac{1}{4}\sum_{i\in [n]}\bar\alpha_{i}\wedge \beta_{1}+\frac{1}{2}\sqrt{-1}\sum_{i\in [n]}c_{i1}\bar\alpha_{i}\wedge \beta_{1},\qquad \bar\partial\bar\beta_{1}=\frac{1}{4}\sum_{i\in [n]}\bar\alpha_{i}\wedge \bar\beta_{1}-\frac{1}{2}\sqrt{-1}\sum_{i\in [n]}c_{i1}\bar\alpha_{i}\wedge \bar\beta_{1}
\]
We can check that $ \rm ker\partial \cap ker \bar\partial\subset \bigwedge^{\ast,\ast} \g^{\ast}_{\C}$ is the direct sum of
\[\langle \alpha_{I}\wedge\bar\alpha_{I}\vert I\subset [n]\rangle
\]
and
\[\langle \alpha_{I}\wedge\bar\alpha_{J}\wedge \beta_{1}\wedge\bar\beta_{1}\vert I\subset [n], J\subset [n], \check{I}\subset J \rangle.
\]
Consider the conjugate Hodge star operator  $\bar\ast$ as in the proof of Lemma \ref{injj}.
Then we should know the subspace $ \rm ker\partial \cap ker \bar\partial\cap  ker (\partial\bar\partial \bar\ast)\subset \bigwedge^{\ast,\ast} \g^{\ast}_{\C}$.
We have
\[ (\partial\bar\partial \bar\ast)(\alpha_{I}\wedge \bar\alpha_{I})=\partial\bar\partial ( \alpha_{\check{I}}\wedge\bar\alpha_{\check{I}}\wedge \beta_{1}\wedge\bar\beta_{1})=\frac{1}{4}\sum_{i,i^{\prime}\in I, 
i\not=i^{\prime}} \alpha_{i}\wedge \bar\alpha_{i^{\prime}}\wedge\alpha_{\check{I}}\wedge\bar\alpha_{\check{I}}\wedge \beta_{1}\wedge\bar\beta_{1}
\]
and
\[ (\partial\bar\partial \bar\ast)(\alpha_{I}\wedge \bar\alpha_{J}\wedge \beta_{1}\wedge\bar\beta_{1})=\partial\bar\partial (\alpha_{\check{I}}\wedge \bar\alpha_{\check{J}})=\frac{1}{4}\sum_{i\in \check{I}, j\in \check{J} }\alpha_{j}\wedge\bar\alpha_{i}\wedge \alpha_{\check{I}}\wedge \bar\alpha_{\check{J}}.
\]
Thus it is sufficient to check term by term.
If $(\partial\bar\partial \bar\ast)(\alpha_{I}\wedge \bar\alpha_{I})=0$, then  $\vert I\vert =1$.
If $(\partial\bar\partial \bar\ast)(\alpha_{I}\wedge \bar\alpha_{J}\wedge \beta_{1}\wedge\bar\beta_{1})=0$ with $\check{I}\subset J $, then  $\check{I}\subset \check{J} $ or  $\check{J}\subset \check{I} $ and hence $I=[s]$ or $J=[s]$.
Hence the proposition follows
\end{proof}

For any $n$-dimensional complex manifold $M$, the inequality 
\[\sum_{p+q=k}\left(\dim H^{p,q}_{BC}(M)+\dim H^{n-p, n-q}_{BC}(M)\right)\ge 2\dim H^{k}(M)
\]
holds for every $k\le 2n$ (\cite[Theorem A]{AT}).
The  $\partial\bar\partial$-Lemma holds if and only if the equality 
\[\sum_{p+q=k}\left(\dim H^{p,q}_{BC}(M)+\dim H^{n-p, n-q}_{BC}(M)\right)= 2\dim H^{k}(M)
\]
holds for every $k\le 2n$ (\cite[Theorem B]{AT}).

By the above computation, on an OT-manifold $H^{s}\times \C^{1}/U\ltimes{\mathcal O}_{K}=\Gamma\backslash G$ of type $(s,1)$,  for $k\le s+1$ we have 
\[\dim H^{k}(\Gamma\backslash G)=\left \{\begin{array}{cccc}
\left (\begin{array}{cc} 
s\\
k
\end{array} \right)  \qquad (k\not= 0,s+1 )\\
0 \qquad \qquad (k=s+1)\\
s  \qquad \qquad  (k=1)\\
1 \qquad \qquad  (k=0)
    \end{array} \right., \]
  \[  \sum_{p+q=k}\dim H^{p,q}_{BC}(\Gamma\backslash G)=\left \{\begin{array}{cccc}
0 \qquad \qquad (k\not=2,0)\\
s  \qquad \qquad  (k=2)\\
1 \qquad \qquad  (k=0)
 \end{array} \right. \]
 and
 \[ \sum_{p+q=k}\dim H^{s+1-p, s+1-q}_{BC}(\Gamma\backslash G)=  \left \{\begin{array}{cccc}
2\left (\begin{array}{cc} 
s\\
s-k
\end{array} \right)  \qquad (k\not= 0, s+1 )\\
1 \qquad \qquad  (k=0)
\\
0 \qquad \qquad (k=s+1)\\
    \end{array} \right. .\]
    
 Hence we have:   
\begin{cor}
On an OT-manifold $H^{s}\times \C^{1}/U\ltimes{\mathcal O}_{K}=\Gamma\backslash G$ of type $(s,1)$, the  equality
\[\sum_{p+q=k}\left(\dim H^{p,q}_{BC}(\Gamma\backslash G)+\dim H^{s+1-p, s+1-q}_{BC}(\Gamma\backslash G)\right)= 2\dim H^{k}(\Gamma\backslash G)
\]

holds for every $k\not=2,  2s$ and does not hold for $k=2, 2s$.
\end{cor}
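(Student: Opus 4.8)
The statement will follow directly from the dimension formulas assembled immediately above, so the plan is to substitute them into the Angella--Tomassini combination and verify the (in)equality degree by degree. Writing $n=s+1$ for the complex dimension and setting
\[
L_{k}=\sum_{p+q=k}\bigl(\dim H^{p,q}_{BC}(\Gamma\backslash G)+\dim H^{s+1-p,s+1-q}_{BC}(\Gamma\backslash G)\bigr),\qquad R_{k}=2\dim H^{k}(\Gamma\backslash G),
\]
the assertion to be proved is that $L_{k}=R_{k}$ for $k\neq 2,2s$ while $L_{k}\neq R_{k}$ for $k=2,2s$.

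First I would exploit a symmetry that halves the bookkeeping. Relabelling $(p,q)\mapsto(s+1-p,s+1-q)$ in the second summand shows $L_{k}=L_{2s+2-k}$, and Poincar\'e duality on the compact oriented manifold $\Gamma\backslash G$ of real dimension $2s+2$ gives $R_{k}=R_{2s+2-k}$. Hence the equality holds in degree $k$ if and only if it holds in degree $2s+2-k$; in particular the degree $k=2$ is paired with $k=2s$, and it suffices to analyse the range $0\le k\le s+1$ and then transport every conclusion by this symmetry.

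On that range I would run the case check using the three displayed formulas for $\sum_{p+q=k}\dim H^{p,q}_{BC}$, for $\sum_{p+q=k}\dim H^{s+1-p,s+1-q}_{BC}$, and for $\dim H^{k}$. For $k=0$ both sides equal $2$; for $k=1$ one gets $0+2\binom{s}{s-1}=2s=2\binom{s}{1}$; for $3\le k\le s$ one gets $0+2\binom{s}{s-k}=2\binom{s}{k}$; and for $k=s+1$ both sides vanish. The single exception is $k=2$: there the contribution $\sum_{p+q=2}\dim H^{p,q}_{BC}=s$ coming from $H^{1,1}_{BC}(\Gamma\backslash G)=\langle\alpha_{1}\wedge\bar\alpha_{1},\dots,\alpha_{s}\wedge\bar\alpha_{s}\rangle$ produces $L_{2}=s+2\binom{s}{2}$ against $R_{2}=2\binom{s}{2}$, a nonzero defect $s>0$. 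By the symmetry of the previous step the identical defect appears at $k=2s$, completing the verification.

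I expect no genuine obstacle: the argument is a finite computation whose content is that the whole discrepancy is precisely the rank $s$ of the ``extra'' Bott--Chern classes $\alpha_{i}\wedge\bar\alpha_{i}$, which are $\partial$- and $\bar\partial$-closed but not accounted for by the $\partial\bar\partial$-pattern of the de Rham and Dolbeault cohomologies. The only points demanding care are the bookkeeping of the exceptional degrees $k=0,1,2,s+1$, where the three cohomologies deviate from the uniform binomial pattern, and the degenerate case $s=1$, in which $2=2s$ so that the two exceptional degrees collapse to the single degree $k=2$.
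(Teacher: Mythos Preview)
Your proposal is correct and follows essentially the same approach as the paper: both derive the corollary by a direct degree-by-degree comparison of the dimension formulas displayed just above it. Your explicit use of the symmetry $L_{k}=L_{2s+2-k}$ together with Poincar\'e duality $R_{k}=R_{2s+2-k}$ makes precise what the paper leaves implicit (it only tabulates the three quantities for $k\le s+1$ and then states the corollary for all $k$), and your remarks on the boundary degrees and on the collapse $2=2s$ when $s=1$ are accurate.
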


\begin{rem}
In case $s=1$ (Inoue surfaces of type $S^{0}$), these results are given by  the general theory of complex surfaces (see \cite{ADT}).

\end{rem}

\end{document}